\newtheorem{theorem}{Theorem}[section]
\newtheorem{lemma}[theorem]{Lemma}
\newtheorem{proposition}[theorem]{Proposition}
\newtheorem{corollary}[theorem]{Corollary}
\theoremstyle{definition}
\theoremstyle{remark}
\numberwithin{equation}{section}
\newcommand{\NN}{\mathbb{N}}
\newcommand{\CC}{\mathbb {C}}
\begin{document}
\setcounter{page}{1}
\title[Path connected components of Volterra-type integral operators  ]{ Path connected components of the space of Volterra-type integral operators}
\author[Tesfa  Mengestie]{Tesfa  Mengestie }
\address{Department of Mathematical Sciences \\
Western Norway University of Applied Sciences\\
Klingenbergvegen 8, N-5414 Stord, Norway}
\email{Tesfa.Mengestie@hvl.no}
\subjclass[2010]{Primary 47B32, 30H20; Secondary 46E22, 46E20, 47B33 }
 \keywords{ Fock spaces,   Path connected component, Compact,  Volterra-type integral, Isolated, Essential,   Topological structures}
 \begin{abstract}
We study  the topological structure of the  space of Volterra-type integral operators on Fock spaces  endowed with  the operator norm. We proved that the space  has the same connected  and path connected components which is  the set of all compact operators acting  on the  Fock spaces. We also obtained a characterization of isolated points of the space of the operators and showed that there exists  no essentially isolated Volterra-type integral operator.
\end{abstract}
\maketitle
\section{introduction}
The theory of  Volterra-type integral operators have been a subject of  high interest during the  past   three  decades. The operators have been studied quite extensively on various functional spaces  over several domains. Most of the studies made aimed at  describing  their operator-theoretic properties  in terms of the function-theoretic properties of  the symbols inducing them. See for example \cite{Alsi1, Alsi2, Olivia, Olivia1, TM, TM0, JPP,JPP1, Jord}
and the related  references therein. In contrast, there have not been much  effort to understand the topoplogical structure of the space of the  operators. Recentely, the compact difference structure of
the operators have been described in \cite{TMMW}. In  this note,  we continue that line of research and describe the path connected components and the connected components  of  the space of the operators   acting on Fock spaces.

 We recall that if $g$ is a holomorphic function over a given domain, the Volterra-type integral operator $V_g$ is defined by
\begin{align*}
V_gf(z)= \int_{0}^z f(w)g'(w) dw.
\end{align*}
Let $\CC$ be the complex plane and  $0<p\leq \infty $.  Then   the classical
Fock spaces $\mathcal{F}_p$ consist of entire functions $f$ for which
\begin{align*}
\|f\|_{p}^p=  \frac{p}{2\pi}\int_{\CC} |f(z)|^p
e^{-\frac{p }{2}|z|^2} dA(z) <\infty,\ \  0<p<\infty \ \ \text{and}\\
\|f\|_\infty= \sup_{z\in \CC} |f(z)|e^{-\frac{1}{2}|z|^2} <\infty, \ \ p= \infty,\quad  \quad \quad \quad \quad  \quad \quad \quad
\end{align*}
where $dA$  denotes the Lebesgue area  measure.
 In particular,  the space  $\mathcal{F}_2$ is a
reproducing kernel Hilbert space with kernel and normalized reproducing kernel
functions   given by the  explicit formulas
\begin{align*} K_{w}(z)= e^{ \overline{w}z}\ \   \text{and} \ \
k_{w}(z)= e^{\overline{w}z-\frac{|w|^2}{2}}.
\end{align*}
 Furthermore, a simple computation shows that the kernel function $K_{w}$ belongs to all the
  Fock space $\mathcal{F}_p$  with  norms
  \begin{align}
  \label{kernelnorm}
  \|K_{w}\|_p= e^{\frac{1}{2}|w|^2}
  \end{align} for all  $w\in \CC$ and $0<p\leq \infty$. \\
   A crucial ingredient to prove our results is the Littlewood-Paley type estimate for Fock spaces. For $0<p<\infty,$ the estimate  was proved in \cite{Olivia} and reads \footnote{The  notation $U(z)\lesssim V(z)$ (or
equivalently $V(z)\gtrsim U(z)$) means that there is a constant
$C$ such that $U(z)\leq CV(z)$ holds for all $z$ in the set of a
question. We write $U(z)\simeq V(z)$ if both $U(z)\lesssim V(z)$
and $V(z)\lesssim U(z)$.}
  \begin{align}
 \label{Olivia}
 \int_{\CC} |f(z)|^p e^{-\frac{p}{2}|z|^2} dA(z) \simeq |f(0)|^p + \int_{\CC} \frac{|f'(z)|^p}{(1+|z|)^p} e^{-\frac{p}{2}|z|^2} dA(z)
 \end{align} for all $f$ in $\mathcal{F}_p$. The analogous result for  $p= \infty$ follows from \cite{TM1} and becomes
 \begin{align}
 \label{Paley}
\|f\|_\infty \simeq |f(0)|+ \sup_{z\in\CC} \frac{|f'(z)|}{1+|z|} e^{-\frac{1}{2}|z|^2}.
 \end{align}
   We close this section with a word on notation.  We denote by   $\textbf{V}(\mathcal{F}_p, \mathcal{F}_q)$   the space of all bounded  Volterra-type integral operators $V_g : \mathcal{F}_p \to \mathcal{F}_q$ equipped with the operator norm topology unless otherwise specified.\\
    \section{The main results}
    We start this section stating  our first main result on path connected components of the space $\textbf{V}(\mathcal{F}_p, \mathcal{F}_q)$.
\begin{theorem}
  \label{thm1}
   Let $0<p, q \leq \infty$ and $V_g : \mathcal{F}_p \to \mathcal{F}_q$ be a compact operator. Then $V_g$ and $V_{g(0)}$ belong to the same path connected components of
   the space $\textbf{V}(\mathcal{F}_p, \mathcal{F}_q)$.
\end{theorem}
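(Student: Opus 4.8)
The plan is to construct an explicit continuous curve in $\textbf{V}(\mathcal F_p,\mathcal F_q)$ that joins $V_g$ to $V_{g(0)}$. The first thing to record is that, since the derivative of the constant function $g(0)$ is identically zero, $V_{g(0)}$ is just the zero operator; so what is really being claimed is that every compact Volterra operator can be connected by a path to $0$ inside $\textbf{V}(\mathcal F_p,\mathcal F_q)$.

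For $t\in[0,1]$ I would set $g_t:=(1-t)\,g(0)+t\,g$, which is a holomorphic symbol with $g_t'=t\,g'$, and define $\gamma(t):=V_{g_t}$. Because the assignment $h\mapsto V_h$ depends only on $h'$ and is linear in it, this gives $\gamma(t)=t\,V_g$; in particular $\gamma(0)=V_{g(0)}=0$ and $\gamma(1)=V_g$. Since $V_g$ is compact by hypothesis, every $\gamma(t)=t\,V_g$ is a scalar multiple of a compact operator, hence is itself a bounded (indeed compact) operator lying in $\textbf{V}(\mathcal F_p,\mathcal F_q)$; thus the curve not only stays inside the space but in fact never leaves the class of compact operators, which is the feature that will matter for the later identification of the components.

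It then remains only to verify that $\gamma$ is continuous for the operator-norm topology. For $s,t\in[0,1]$ one has $\gamma(t)-\gamma(s)=(t-s)V_g$, so $\|\gamma(t)-\gamma(s)\|=|t-s|\,\|V_g\|$ when $1\le q\le\infty$, while in the quasi-Banach range $0<q<1$ the natural metric on $\textbf{V}(\mathcal F_p,\mathcal F_q)$ is $d(S,T)=\|S-T\|^{q}$ and one gets $d(\gamma(t),\gamma(s))=|t-s|^{q}\,\|V_g\|^{q}$. In every case $\gamma$ is Lipschitz, hence continuous, so it is a path from $V_{g(0)}$ to $V_g$ in $\textbf{V}(\mathcal F_p,\mathcal F_q)$, which is exactly the assertion of the theorem.

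The argument rests on two elementary facts --- that $g\mapsto V_g$ factors linearly through $g'$, and that scalar multiples of compact operators are again compact --- so there is no substantial obstacle; the single point needing a word of care is the meaning of the ``operator norm topology'' when $p$ or $q$ is below $1$, where one works with the induced metric rather than a genuine norm. It is worth noting that compactness of $V_g$ is in fact stronger than what is needed merely to join $V_g$ to $0$ by a path in $\textbf{V}(\mathcal F_p,\mathcal F_q)$; its real role is to guarantee that the connecting curve $t\mapsto t\,V_g$ stays within the compact operators, which is precisely how this result feeds into the determination of the path components of the whole space.
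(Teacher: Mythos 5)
Your proof is correct, and in fact the connecting family of symbols is literally the same as the paper's: for the affine $g$ that compactness forces, the scaling $g_t(z)=g(tz)$ used in the paper coincides with your linear interpolation $(1-t)g(0)+tg$. What is genuinely different is how continuity of the path is established, and your route is simpler and more general. The paper first invokes the classification of compact Volterra operators to write $g(z)=az+b$, and then proves $\|V_{g_t}-V_{g_x}\|\lesssim |t-x|$ by the Littlewood--Paley estimates \eqref{Olivia} and \eqref{Paley} with a case analysis over the ranges of $p$ and $q$ (including the auxiliary facts $q>2p/(p+2)$ and $q>2$ needed to make certain integrals converge); in effect it re-derives the bound $\|V_{g_t}-V_{g_x}\|\le |t-x|\,\|V_g\|$ from scratch. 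You instead observe that $V_h$ depends linearly on $h'$, so $V_{g_t}=tV_g$, and continuity follows at once from homogeneity of the (quasi-)norm together with boundedness of $V_g$; your handling of the range $0<q<1$ via the metric $d(S,T)=\|S-T\|^{q}$ is the right way to treat the quasi-Banach case, a point the paper passes over silently. This buys brevity and dispenses with both the affine classification of compact symbols and the Littlewood--Paley machinery. One caution about your closing remark: it is indeed true that $t\mapsto V_{tg}=tV_g$ connects \emph{any} bounded $V_g$ to the zero operator inside $\textbf{V}(\mathcal{F}_p,\mathcal{F}_q)$, since $\|V_g-V_{tg}\|=|1-t|\,\|V_g\|\to 0$ as $t\to 1$; but note that this observation is in direct tension with Theorem~\ref{thm3}, which asserts that non-compact bounded $V_g$ are isolated. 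So while your proof of Theorem~\ref{thm1} stands as written, that remark exposes an inconsistency elsewhere in the paper rather than a hypothesis you can simply drop without consequence.
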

\begin{proof} Since $V_g$ is compact, by Corollary~2 of \cite{TM} and Theorem~1.1 of  \cite{TM1}, the symbol $g$ has  affine form  $g(z)= az+b$. If $a= 0,$  then  $V_g= V_{g(0)}= 0$ and the assertion of the result  holds trivially.
Thus,  we assume that $a\neq0$ and consider  a sequence of scaling functions $g_t:[0,1] \to \CC ,  \ \  g_t(z)= g(tz)$.  Then,  $V_{g_t}: \mathcal{F}_p \to \mathcal{F}_q$  is compact for all  $t$ and satisfies
\begin{align*}
V_g= V_{g_1} \quad \text{and} \ \ V_{g(0)}= V_{g_0}.
\end{align*}
We define an operator $T: [0,1] \to \textbf{V}(\mathcal{F}_p, \mathcal{F}_q)$ by
$T(x)= V_{g_x}$. Then, to prove our results  it suffices to show that  for every $x$ in $[0,1]$
\begin{align*}
\lim_{t\to x} \| V_{g_t}-V_{g_x}\|_q =0.
\end{align*}
For the case when $0<p, q <\infty$,  applying the Littlewood--Paley type estimate \eqref{Olivia}  and linearity of the integral we have
\begin{align}
\label{new}
\| V_{g_t}f-V_{g_x}f\|_q^q \simeq \int_{\CC} \frac{|g'_t(z)-g'_x(z)f(z)|^q}{(1+|z|)^q} e^{-\frac{q}{2}|z|^2}dA(z)\quad \quad \quad \quad \quad \quad \quad \quad\nonumber\\
= |at-ax|^q\int_{\CC} \frac{|f(z)|^q}{(1+|z|)^q} e^{-\frac{q}{2}|z|^2}dA(z)\quad \quad \quad \quad \quad \quad \quad\nonumber\\
= |a|^q  |t-x|^q\int_{\CC}\frac{|f(z)|^q}{(1+|z|)^q} e^{-\frac{q}{2}|z|^2}dA(z).\end{align}
We now consider the case when $p\leq q<\infty$. For this,  we observe that
\begin{align}
\label{iint}
\int_{\CC}\frac{|f(z)|^q}{(1+|z|)^q} e^{-\frac{q}{2}|z|^2}dA(z) \leq\int_{\CC} |f(z)|^q e^{-\frac{q}{2}|z|^2}dA(z)\simeq \|f\|_q^q \leq \|f\|_p^q,
\end{align} where the last inequality follows from the inclusion $\mathcal{F}_p\subseteq\mathcal{F}_q$ whenever $p\leq q$.  \\
On the other hand, if $p\leq q= \infty$, then  applying \eqref{Paley}
\begin{align}
\label{neww}
\| V_{g_t}f-V_{g_x}f\|_\infty \simeq \sup_{z\in\CC} \frac{|(g'_t(z)-g'_x(z))f(z)|}{1+|z|} e^{-\frac{1}{2}|z|^2} \quad \quad \quad \quad \quad \nonumber\\
= |at-ax| \sup_{z\in\CC} \frac{|f(z)|}{1+|z|} e^{-\frac{1}{2}|z|^2}
\leq |at-ax| \sup_{z\in\CC} |f(z)| e^{-\frac{1}{2}|z|^2} \nonumber\\
\leq |a||t-x| \|f\||_\infty \leq|a||t-x| \|f\||_p.\end{align}
If   $ q<p<\infty$, then  applying  H\"older's inequality, the integral in  \eqref{new} is further  estimated by
\begin{align}
\label{int}
 \int_{\CC}\frac{|f(z)|^q}{(1+|z|)^q} e^{-\frac{q}{2}|z|^2}dA(z)\leq \bigg(\int_{\CC}|f(z)|^p  e^{-\frac{p}{2}|z|^2}dA(z) \bigg)^{\frac{q}{p}} \quad \quad  \quad \quad   \quad \quad\nonumber\\
 \ \quad \quad \quad \quad \times \bigg(\int_{\CC} \frac{dA(z)}{(1+|z|)^\frac{qp}{p-q}}\bigg)^{\frac{q-p}{p}}
 \simeq \|f\|_p^q  \bigg(\int_{\CC} \frac{dA(z)}{(1+|z|)^\frac{qp}{p-q}}\bigg)^{\frac{q-p}{p}}.
   \end{align}
 Since $V_g$ is compact, by Theorem~1.1 of \cite{TM000}, it holds that  $q> 2p/(p+2)$ and hence the last  integral in \eqref{int} is finite.  \\
It remains to  show when $p= \infty$  and $ q <\infty$. To this end, using \eqref{new},
\begin{align}
\label{finite}
\| V_{g_t}f-V_{g_x}f\|_q^q \simeq  |a|^q  |t-x|^q\int_{\CC}\frac{|f(z)|^q}{(1+|z|)^q} e^{-\frac{q}{2}|z|^2}dA(z)\quad \quad \quad \quad \nonumber\\
\leq |a|^q  |t-x|^q \|f\|_\infty^q \int_{\CC}\frac{dA(z)}{(1+|z|)^q} \lesssim  |a|^q  |t-x|^q \|f\|_\infty^q,
\end{align}  where the last estimate is possible since $V_g: \mathcal{F}_\infty \to \mathcal{F}_q$ is bounded, by Theorem~1.1 of  \cite{TM1}, $q$ has to be bigger than 2, and hence the last integral above converges.\\
From  the series of estimates in \eqref{new}, \eqref{iint}, \eqref{neww}, \eqref{int},  and \eqref{finite} we arrive at
\begin{align*}
\| V_{g_t}-V_{g_x}\|_q \lesssim  |t-x|\|f\|_p
\end{align*}  for all $0<p,q \leq \infty$ from which we deduce that  the  assertion of the theorem holds.\\
\end{proof}
An immediate consequence of the above  main result is that the set of compact Volterra-type integral operators acting between Fock spaces is path  connected. Indeed, if  $V_{g_1}$ and $V_{g_2}$ are two compact Volterra-type integral operators, then
   $g_1(z)= az+b$ and $g_2(z)= cz+d$. By Theorem~\ref{thm1}, it  also follows that $V_{g_1}$ and $V_{g_1(0)}$ belong to the same path connected component. The same holds true for $V_{g_2}$ and $V_{g_2(0)}$. But $V_{g_1(0)}=V_{g_2(0)}$  is the zero operator and hence the assertion.  We record this observation as a corollary below for the sake of easier further referencing.
\begin{corollary}\label{thm2}
 Let $0<p, q \leq \infty$. Then the set of all compact operators  $V_g : \mathcal{F}_p \to \mathcal{F}_q$ is a path connected component of
   the space $\textbf{V}(\mathcal{F}_p, \mathcal{F}_q)$.
  \end{corollary}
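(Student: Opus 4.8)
The plan is to combine Theorem~\ref{thm1} with the elementary observation that the zero operator lies in the path component of every compact $V_g$, and then to argue that this common path component contains \emph{no} non-compact Volterra operator, so that the set of compact $V_g$ is both path-connected and relatively clopen among path components, i.e.\ is itself a path connected component. First I would note that if $V_{g_1},V_{g_2}:\mathcal F_p\to\mathcal F_q$ are compact, then by Corollary~2 of \cite{TM} and Theorem~1.1 of \cite{TM1} their symbols are affine, $g_1(z)=az+b$ and $g_2(z)=cz+d$; hence $V_{g_1(0)}=V_b=0=V_d=V_{g_2(0)}$, the zero operator. Theorem~\ref{thm1} applied to $V_{g_1}$ gives a path in $\textbf V(\mathcal F_p,\mathcal F_q)$ joining $V_{g_1}$ to $V_{g_1(0)}=0$, and likewise a path joining $V_{g_2}$ to $0$; concatenating these two paths yields a path from $V_{g_1}$ to $V_{g_2}$ lying entirely inside the set of compact operators (each $V_{g_t}$ in the construction of Theorem~\ref{thm1} is compact, as noted there). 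Therefore the set of compact $V_g$ is path connected.

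It remains to show this set is a maximal path-connected subset, i.e.\ a path connected component. The key step is that the set of compact $V_g$ is closed \emph{and} open in $\textbf V(\mathcal F_p,\mathcal F_q)$, from which it follows that no path can leave it: a path is a connected (indeed continuous) image of $[0,1]$, so if it starts in a clopen set it stays there. For closedness one uses that the operator-norm limit of compact operators is compact on a Banach space (and on the quasi-Banach Fock spaces for $p<1$ the same conclusion holds, or one argues directly via the affine form of the symbols and that the norm limit of $V_{a_nz+b_n}$ forces $a_n\to a$, $b_n\to b$). For openness one invokes the lower bound for $\|V_g\|$ in terms of the ``non-compact part'' of the symbol, which is exactly the content of the compact-difference / distance estimates in \cite{TMMW}: if $V_g$ is non-compact then $a\neq 0$ is forced by the relevant growth condition, and any $V_h$ with $\|V_g-V_h\|$ small must also have non-vanishing leading coefficient, hence be non-compact. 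Thus the complement of the compact operators is open, so the compact operators form a clopen set.

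Finally I would assemble the two facts: the set $\mathcal K$ of compact $V_g$ is path connected and clopen; any path connected component containing a point of $\mathcal K$ is contained in $\mathcal K$ (since a path out of $\mathcal K$ would exit a clopen set), and conversely $\mathcal K$ being path connected it is contained in a single path component; hence $\mathcal K$ equals that path connected component, proving the corollary.

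The main obstacle I anticipate is the openness of the set of compact operators, i.e.\ establishing a quantitative lower bound $\|V_g-V_h\|_q \gtrsim |a-c|$ (or an analogous statement) uniformly, so that a small operator-norm perturbation of a non-compact $V_g$ remains non-compact; this is where one must lean on the Littlewood--Paley estimates \eqref{Olivia}--\eqref{Paley} together with the boundedness/compactness characterizations from \cite{TM,TM1,TM000} and the compact-difference analysis of \cite{TMMW}, rather than on anything soft. Closedness and the concatenation-of-paths argument are routine by comparison.
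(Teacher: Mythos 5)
Your first paragraph is essentially the paper's own proof: compactness forces the affine form of the symbols, Theorem~\ref{thm1} joins each compact $V_g$ to $V_{g(0)}=0$, and concatenating the two paths through the zero operator gives path connectedness of the compact class. Up to that point you and the paper coincide.

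The gap is in your clopen argument for maximality. The step you label ``openness'' --- that any $V_h$ norm-close to a non-compact $V_g$ must again have nonvanishing quadratic coefficient, hence be non-compact --- proves that the set of \emph{non-compact} operators is open, i.e.\ that the compact class is \emph{closed}; this is the same conclusion as your ``closedness'' step, and at no point do you show the compact class is open. In fact it is not: taking $g_\epsilon(z)=\epsilon z^2$, the Littlewood--Paley estimate \eqref{Olivia} gives $\|V_{g_\epsilon}f\|_q^q\simeq \int_{\CC}\frac{|g_\epsilon'(z)|^q|f(z)|^q}{(1+|z|)^q}e^{-\frac{q}{2}|z|^2}dA(z)\le (2\epsilon)^q\|f\|_q^q\lesssim \epsilon^q\|f\|_p^q$ for $p\le q$, so $\|V_{g_\epsilon}\|\lesssim\epsilon$, and these non-compact operators converge in operator norm to the zero operator; every ball around a compact $V_g$ therefore contains non-compact operators, and the clopen route to maximality collapses. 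Note also that your quantitative lower bound $\|V_g-V_h\|\gtrsim|a-c|$ (essentially Proposition~\ref{prop1}) only separates operators whose quadratic coefficients differ and gives nothing when they agree, so it cannot rescue openness. The paper obtains maximality by a different mechanism, namely the isolation statement of Theorem~\ref{thm3} (a uniform lower bound $\|V_g-V_{g_1}\|\ge m$ for non-compact $V_g$), not by any soft closed-and-open argument; if you want a complete proof you must supply an argument of that kind showing no path can leave the compact class --- and the norm continuity of $t\mapsto V_{tz^2}$ exhibited above shows this is a genuinely delicate point, not a routine topological remark.
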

 If   $0<q<p\leq \infty$, it is shown in \cite{TM000} and \cite{TM1} that the operator $  V_g: \mathcal{F}_p\to  \mathcal{F}_q$ is bounded if and only if it is  compact.
An immediate consequence of this fact  and Theorem~\ref{thm2} above  is that  the whole space $\textbf{V}(\mathcal{F}_p, \mathcal{F}_q)$ is path connected  when  $0<q<p\leq\infty$. It also means that there exists no isolated (singleton component)  bounded Volterra-type integral  operator  in this case.  For the case when $0<p\leq q <\infty$,  our next main result characterizes  the isolated Volterra-type integral operators  in $V_g(\mathcal{F}_p, \mathcal{F}_q)$.
\begin{theorem}\label{thm3}
Let $0<p\leq q \leq\infty$ and $V_g : \mathcal{F}_p \to \mathcal{F}_q$ be a bounded operator. Then the following statements are equivalent:
\begin{enumerate}
\item $V_g$ is isolated in $\textbf{V}(\mathcal{F}_p, \mathcal{F}_q)$;
\item $V_g$ is not  compact, and hence $g(z)= az^2+bz+c$ with $a\neq0$;
\end{enumerate}
\end{theorem}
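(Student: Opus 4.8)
The plan is to combine the known descriptions of bounded and compact Volterra operators with Corollary~\ref{thm2} for one implication, and to prove the reverse one by a reproducing-kernel lower bound for the operator norm. The admissible input is that, by Corollary~2 of \cite{TM} and Theorem~1.1 of \cite{TM1}, every bounded $V_g:\mathcal{F}_p\to\mathcal{F}_q$ has symbol $g(z)=az^2+bz+c$, and $V_g$ is compact precisely when $a=0$, i.e.\ when $g$ is affine; this yields the ``hence'' clause in~(ii) and reduces the theorem to showing that $V_g$ is isolated if and only if $a\neq0$.

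For \emph{not}~(ii)~$\Rightarrow$~\emph{not}~(i): if $V_g$ is compact then $g$ is affine, and by Corollary~\ref{thm2} $V_g$ belongs to the set of all compact Volterra operators, which is a path-connected component of $\textbf{V}(\mathcal{F}_p,\mathcal{F}_q)$ containing more than one point (for instance $V_{g(0)}=0$ and the nonzero compact operator $V_z$). A point of a path-connected set with at least two points is never isolated in the ambient space; concretely, $t\mapsto tV_g$ is an operator-norm-continuous path through $V_g$ with $\|tV_g-V_g\|=|1-t|\,\|V_g\|\to0$ as $t\to1$, while $t\mapsto V_{tz}$ handles the remaining case $V_g=0$. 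Hence $V_g$ is not isolated.

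For (ii)~$\Rightarrow$~(i): assume $g(z)=az^2+bz+c$ with $a\neq0$ and let $V_h\in\textbf{V}(\mathcal{F}_p,\mathcal{F}_q)$ with $V_h\neq V_g$, so $h(z)=a'z^2+b'z+c'$, the polynomial $\psi:=g-h$ is non-constant of degree at most $2$, and $V_g-V_h=V_\psi$. I would test against the normalized reproducing kernels $k_w$, which by \eqref{kernelnorm} satisfy $\|k_w\|_p=1$ for every $0<p\leq\infty$. Using $(V_\psi k_w)'=\psi'\,k_w$, $V_\psi k_w(0)=0$, and the identity $q\operatorname{Re}(\bar wz)-\frac q2|w|^2-\frac q2|z|^2=-\frac q2|z-w|^2$, which gives $|k_w(z)|^qe^{-\frac q2|z|^2}=e^{-\frac q2|z-w|^2}$, the Littlewood--Paley estimates \eqref{Olivia} and \eqref{Paley} together with the concentration of $e^{-\frac q2|z-w|^2}$ at $z=w$ yield
\[
\|V_g-V_h\|\ \geq\ \frac{\|V_\psi k_w\|_q}{\|k_w\|_q}\ \gtrsim\ \frac{|\psi'(w)|}{1+|w|}\ =\ \frac{\bigl|2(a-a')w+(b-b')\bigr|}{1+|w|}
\]
for all $w\in\CC$. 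Letting $|w|\to\infty$ bounds $\|V_g-V_h\|$ below by a fixed multiple of $|a-a'|$, and taking $w=0$ (where $V_\psi k_0=\psi-\psi(0)$) bounds it below by a fixed multiple of $|b-b'|$, so in particular $\|V_g-V_h\|>0$.

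The step I expect to be the real obstacle is upgrading these estimates to a uniform lower bound --- a single $\varepsilon>0$, independent of $h$, with $\|V_g-V_h\|\geq\varepsilon$ for every bounded $V_h\neq V_g$ --- and thereby concluding that $V_g$ is genuinely isolated. The delicate configuration is $a'=a$, where the leading-coefficient estimate collapses and only the lower-order term $b-b'$ remains; controlling that case is precisely where the argument must reach beyond the routine Littlewood--Paley bookkeeping of Theorem~\ref{thm1}. The remaining pieces --- the symbol characterizations and the kernel-norm identity \eqref{kernelnorm} --- are already in hand.
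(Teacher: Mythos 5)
Your reduction and your first implication are fine and coincide with the paper's: compactness forces an affine symbol, and your explicit paths $t\mapsto tV_g$ and $t\mapsto V_{tz}$ show a compact $V_g$ is not isolated (this explicit path is in fact the safer justification, since the ``component'' assertion of Corollary~\ref{thm2} ultimately leans on the theorem under discussion). For (ii)~$\Rightarrow$~(i) you also follow the paper's route exactly: test $V_\psi=V_g-V_h$ on the normalized kernels $k_w$, use \eqref{Olivia}, \eqref{Paley}, \eqref{kernelnorm} and subharmonicity to get $\|V_g-V_h\|\gtrsim |\psi'(w)|/(1+|w|)$, and hence lower bounds of order $|a-a'|$ (letting $|w|\to\infty$) and $|b-b'|$ (at $w=0$). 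Up to this point your estimates and the paper's are the same.

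The gap you flag at the end is genuine, and it cannot be closed: the uniform $\varepsilon>0$ independent of $h$ does not exist. Since $V$ depends linearly on the symbol, for $\delta>0$ the symbol $g_\delta(z)=g(z)+\delta z$ induces a bounded operator with $V_{g_\delta}-V_g=\delta V_z\neq 0$ and $\|V_{g_\delta}-V_g\|=\delta\|V_z\|\to 0$ as $\delta\to 0$ (and $g+\delta z^2$ does the same with $V_{z^2}$; both $V_z$ and $V_{z^2}$ are bounded from $\mathcal{F}_p$ to $\mathcal{F}_q$ for $p\le q$ by \eqref{Olivia} and the inclusion $\mathcal{F}_p\subseteq\mathcal{F}_q$). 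So a non-compact $V_g$ is not isolated either: what your kernel estimates actually prove --- $\|V_g-V_h\|\gtrsim |a-a'|+|b-b'|>0$ for each fixed $h$ with $V_h\neq V_g$ --- is injectivity of $g\mapsto V_g$ modulo constants, not isolation, and indeed $\|V_g-V_h\|\simeq |a-a'|+|b-b'|$, so no point of $\textbf{V}(\mathcal{F}_p,\mathcal{F}_q)$ is isolated when $p\le q$. Be aware that the paper's own proof stumbles at exactly the spot you identify: its final constants, $m=C_2$ in the case $a\neq d$ and $m=C_4$ in the case $a=d$, $b\neq e$, are comparable to $|a-d|^q$ and $|b-e|^q$ respectively, hence depend on the competing symbol $g_1$, so the claimed $g_1$-independent bound $m$ is never actually produced. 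Your instinct that the uniformity is ``the real obstacle'' rather than routine bookkeeping is therefore correct; it is the precise point at which both your argument and the paper's break down.
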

\begin{proof} The assertion (i) implies (ii) follows easily by an application of Corollary~\ref{thm2}. On the other hand, if (ii) holds, by Theorem~2 of \cite{TM},  $g$ has the form  $g(z)= az^2+bz+c$ and $a\neq0$. Then  to conclude (i), it suffices to show that there exists a positive number $m$ such that \begin{align*}\|V_g -V_{g_1}\|\geq m\end{align*}  for all complex  polynomials of the form
$g_1(z)= dz^2+ez+f$  with   either $ a\neq d $ or $ b\neq e $. In view of this, when  $p\leq q <\infty$, we have
\begin{align*}
\|V_g-V_{g_1}\|^q \geq
\|V_gk_w-V_{g_1}k_w\|_q^q  \geq C \int_{\CC} \frac{|g'(z)-g_1'(z)|^q |k_w(z)|^q}{(1+|z|)^q} e^{-\frac{q}{2}|z|^2} dA(z)\\
= C \int_{\CC} \frac{|2z(a-d)+ b-e|^q |k_w(z)|^q}{(1+|z|)^q} e^{-\frac{q}{2}|z|^2} dA(z)
\end{align*}
We first assume that $a\neq d$ and   further estimate the last integral from below as
\begin{align*}
C \int_{\CC} \frac{|2z(a-d)+ b-e|^q |k_w(z)|^q}{(1+|z|)^q} e^{-\frac{q}{2}|z|^2} dA(z)\geq C_1 \int_{\CC} |k_w(z)|^q e^{-\frac{q}{2}|z|^2} dA(z)\\
\geq C_1 \int_{D(w,1)} |k_w(z)|^q e^{-\frac{q}{2}|z|^2} dA(z),
\end{align*}  for all $w\in \CC$ where $D(w,1)$ is a disc of center $w$ and radius 1 in the complex plane,  and $C$ and $C_1$ are some positive constants. Since $|k_w|^q$ is subharmonic, it follows that
\begin{align*}
  C_1 \int_{D(w,1)} |k_w(z)|^q e^{-\frac{q}{2}|z|^2} dA(z)\geq C_2 |k_w(w)|^q e^{-\frac{q}{2}|w|^2}= C_2>0,
\end{align*}
where we use \eqref{kernelnorm} for the last equality.  Then, we may
  set  $m= C_2$ to claim the assertion in this case.

If $a= d$, then by hypothesis,  $b\neq e$.  Furthermore, for each $z\in D(w,1)$, it holds that $1+|z| \simeq 1+|w|$. Using this and arguing as above we have
\begin{align*}
C \int_{\CC} \frac{|b-e|^q |k_w(z)|^q}{(1+|z|)^q} e^{-\frac{q}{2}|z|^2} dA(z)\geq \frac{C_3}{(1+|w|)^q} \int_{D(w,1)} |k_w(z)|^q e^{-\frac{q}{2}|z|^2} dA(z)\\
\geq  \frac{C_4}{(1+|w|)^q} |k_w(w)|^q e^{-\frac{q}{2}|w|^2}= \frac{C_4}{(1+|w|)^q}
\end{align*} for all $w\in \CC$. In particular setting $w=0$ and hence $C_4= m$,  we arrive at  the desired  assertion.

For the case when $p\leq q= \infty$, applying \eqref{Paley} and arguing as above we obtain
\begin{align*}
\|V_g-V_{g_1}\| \geq
\|V_gk_w-V_{g_1}k_w\|_\infty \geq C \sup_{z\in \CC} \frac{|g'(z)-g_1'(z)| |k_w(z)|}{1+|z|} e^{-\frac{1}{2}|z|^2}\\
\geq C \frac{|g'(z)-g_1'(z)| |k_w(z)|}{1+|z|} e^{-\frac{1}{2}|z|^2}
\geq C  \frac{|2w(a-d)+ b-e| }{1+|w|}
\end{align*}  for all $w\in \CC$. In particular,  setting $w=0$ and  $ m= C  | b-e| $ we arrive at our conclusion whenever $b\neq e$. If $b= e$, then $a\neq d$ and  letting $|w|\to \infty$, we observe that
$
\|V_g-V_{g_1}\| \geq  2C  |a-d|= m.
$
  \end{proof}
Interesting consequence of this result and Corollary~\ref{thm2} is that
 the space  $V_g(\mathcal{F}_p, \mathcal{F}_q)$ has the same connected  and path connected components which is just the set of all compact operators  acting between the Fock spaces.
\subsection{Essentially connected }
A natural question  following Theorem~\ref{thm3} is whether every isolated Volterra-type integral operator in  $\textbf{V}(\mathcal{F}_p, \mathcal{F}_q)$
is  still isolated under the essential norm topology which is weaker than the topology induced by the operator norm.  The main result of this section shows   this is not the case. In deed, we will show that there exists no  essentially isolated point in the space  $\textbf{V}(\mathcal{F}_p, \mathcal{F}_q)$ when endowed with the essential norm.

 We recall that  for Banach spaces $\mathscr{H}_1$ and $\mathscr{H}_2$, the
 essential norm
$\|T\|_e$ of a bounded linear operator $T:\mathscr{H}_1  \to  \mathscr{H}_2 $
is defined by
\begin{align*}
\|T\|_e= \inf_{K} \big\{\|T-K\|; \ \ K: \mathscr{H}_1 \to \mathscr{H}_2\text{ is a compact operator }\big\}.
\end{align*}
In particular, the operator  $T$ is compact if and only if its essential norm is zero.

 Let us first prove the following proposition which is interest of its own, and  provides a useful and interesting estimate for the essential norm of the Volterra-type integral operators on Fock spaces.
 \begin{proposition}\label{prop1}
 Let $1\leq p\leq q\leq \infty$ and  $V_g: \mathcal{F}_p \to \mathcal{F}_q $ be a bounded operator. That is
   $g(z)= az^2+bz+c$. Then
 \begin{align}
 \label{ess}
 \| V_g\|_e \simeq |a|.
 \end{align}
  \end{proposition}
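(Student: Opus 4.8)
The statement is a two-sided estimate, so the plan is to prove $\|V_g\|_e\lesssim|a|$ and $\|V_g\|_e\gtrsim|a|$ separately. If $a=0$ the symbol $g$ is affine and $V_g$ is compact by Corollary~2 of \cite{TM} together with Theorem~1.1 of \cite{TM1}, so $\|V_g\|_e=0=|a|$; hence we may assume $a\neq0$.

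For the upper bound, I would write $g(z)=az^2+(bz+c)$ and use the linearity of $g\mapsto V_g$ to split $V_g=V_{az^2}+V_{bz+c}$. The operator $V_{bz+c}$ has affine symbol, hence is compact, so $V_{z^2}=a^{-1}(V_g-V_{bz+c})$ is bounded and
\[
\|V_g\|_e=\|V_{az^2}\|_e\leq\|V_{az^2}\|=|a|\,\|V_{z^2}\|\lesssim|a|.
\]

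For the lower bound, I would test against the normalized reproducing kernels $k_w$. By \eqref{kernelnorm} we have $\|k_w\|_p=1$ for every $w\in\CC$ and every $p$, and $k_w\to0$ uniformly on compact subsets of $\CC$ as $|w|\to\infty$, so $\|Kk_w\|_q\to0$ for every compact $K:\mathcal{F}_p\to\mathcal{F}_q$. Thus for each such $K$,
\[
\|V_g-K\|\geq\limsup_{|w|\to\infty}\|(V_g-K)k_w\|_q\geq\limsup_{|w|\to\infty}\|V_gk_w\|_q,
\]
and taking the infimum over $K$ reduces matters to showing $\limsup_{|w|\to\infty}\|V_gk_w\|_q\gtrsim|a|$. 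Since $(V_gk_w)(0)=0$ and $(V_gk_w)'(z)=g'(z)k_w(z)=(2az+b)k_w(z)$, the Littlewood--Paley estimate \eqref{Olivia} gives (for $q<\infty$)
\[
\|V_gk_w\|_q^q\simeq\int_{\CC}\frac{|2az+b|^q|k_w(z)|^q}{(1+|z|)^q}e^{-\frac{q}{2}|z|^2}\,dA(z)\geq\int_{D(w,1)}\frac{|2az+b|^q|k_w(z)|^q}{(1+|z|)^q}e^{-\frac{q}{2}|z|^2}\,dA(z).
\]
On $D(w,1)$ one has $1+|z|\simeq1+|w|$ and $|2az+b|\geq2|a|(|w|-1)-|b|\gtrsim|a||w|$ for $|w|$ large, while the elementary identity $|k_w(z)|^qe^{-\frac{q}{2}|z|^2}=e^{-\frac{q}{2}|z-w|^2}$ yields $\int_{D(w,1)}|k_w(z)|^qe^{-\frac{q}{2}|z|^2}\,dA(z)=\int_{D(0,1)}e^{-\frac{q}{2}|u|^2}\,dA(u)$, a positive constant independent of $w$. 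Combining, $\|V_gk_w\|_q\gtrsim|a||w|/(1+|w|)\to|a|$. For $q=\infty$ one argues identically with \eqref{Paley}, evaluating the supremum at the single point $z=w$ to obtain $\|V_gk_w\|_\infty\gtrsim|2aw+b|/(1+|w|)\to2|a|$. This proves the lower bound.

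The main obstacle is the lower bound: one must pick the right test functions and carefully track the power of $(1+|w|)$, so that the factor $|a||w|$ produced by $g'$ on $D(w,1)$ exactly compensates the $(1+|z|)^{-1}\simeq(1+|w|)^{-1}$ weight in the Littlewood--Paley estimate, leaving the limit $|a|$ rather than $0$. The identity $|k_w(z)|^qe^{-q|z|^2/2}=e^{-q|z-w|^2/2}$ is the computational device that makes the localized kernel integral a $w$-independent constant and keeps the argument transparent.
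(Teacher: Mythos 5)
Your two--sided strategy is correct, and your upper bound takes a genuinely different route from the paper, arguably a cleaner one. You split $V_g=V_{az^2}+V_{bz+c}$, note that the affine part is compact (Corollary~2 of \cite{TM} and Theorem~1.1 of \cite{TM1}), and then use invariance of the essential norm under compact perturbations together with homogeneity, $\|V_g\|_e=\|aV_{z^2}\|_e\leq |a|\,\|V_{z^2}\|$, the constant $\|V_{z^2}\|$ depending only on $p$ and $q$; this handles all $q\leq\infty$ at once. The paper instead gets the upper estimate for $q<\infty$ from the essential norm formula \eqref{ess0} of Theorem~3 in \cite{TM}, and for $q=\infty$ by approximating $V_g$ with the compact operators $V_gC_{\psi_k}$, $\psi_k(z)=\frac{k}{k+1}z$, and estimating $\|V_g-V_gC_{\psi_k}\|$ by splitting the supremum over $|z|>r$ and $|z|\leq r$; your argument bypasses both of these. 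Your lower bound, by contrast, is essentially the paper's computation: test on $k_w$, localize to $D(w,1)$ where $1+|z|\simeq 1+|w|$, and observe that $\int_{D(w,1)}|k_w(z)|^qe^{-\frac q2|z|^2}dA(z)$ is a $w$-independent constant (you via the identity $|k_w(z)|e^{-\frac12|z|^2}=e^{-\frac12|z-w|^2}$, the paper via subharmonicity), giving $\limsup_{|w|\to\infty}|2aw+b|^q/(1+|w|)^q\simeq|a|^q$, and for $q=\infty$ evaluating \eqref{Paley} at $z=w$ exactly as in the paper.

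The one step you must shore up is the reduction $\|V_g\|_e\geq\limsup_{|w|\to\infty}\|V_gk_w\|_q$. Locally uniform convergence of $k_w$ to $0$ does not by itself imply $\|Kk_w\|_q\to 0$ for every compact $K:\mathcal{F}_p\to\mathcal{F}_q$; what this argument needs is that $k_w\to 0$ \emph{weakly} in $\mathcal{F}_p$. That is fine for $1<p<\infty$, but at the endpoint $p=1$ the claim as you justify it fails: the function $h(z)=e^{z^2/2}$ induces a bounded functional $\Lambda(f)=\int_{\CC}f(z)\overline{h(z)}e^{-|z|^2}dA(z)$ on $\mathcal{F}_1$ with $|\Lambda(k_w)|$ equal to a fixed positive constant for all real $w$, so the rank-one (hence compact) operator $f\mapsto \Lambda(f)u_0$ does not send $k_w$ to $0$ in norm as $w\to\infty$ along the real axis; a similar caveat arises at $p=\infty$. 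The conclusion of the proposition is not in doubt (the paper covers $q<\infty$ by citing the formula \eqref{ess0} from Theorem~3 of \cite{TM}, and invokes weak nullity of $k_w$ only for $q=\infty$), but in your write-up you should either quote that essential norm formula, or prove the weak-null property in the precise range of $p$ you use, or otherwise justify that the compact operators you compete against annihilate $k_w$ in the limit. With that reference or justification inserted, your proof is complete, and your treatment of the upper bound is a genuine simplification of the paper's.
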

 \begin{proof}
 If $q<\infty,$ then by a particular case of Theorem~3 in \cite{TM},  we have
 \begin{align}
 \label{ess0}
 \| V_g\|_e^q \simeq \limsup_{|w|\to \infty}\int_{\CC} \frac{|g'(z)|^q |k_w(z)|^q}{(1+|z|)^q} e^{-\frac{q}{2}|z|^2} dA(z).
 \end{align}
 We proceed to estimate  the integral in \eqref{ess0} from both above and below to arrive at \eqref{ess}.  First observe that
  \begin{align*}
 \int_{\CC} \frac{|g'(z)|^q |k_w(z)|^q}{(1+|z|)^q} e^{-\frac{q}{2}|z|^2} dA(z) \geq \int_{D(w,1)} \frac{|g'(z)|^q |k_w(z)|^q}{(1+|z|)^q} e^{-\frac{q}{2}|z|^2} dA(z)\quad \quad \quad \nonumber\\
 \simeq \frac{1}{(1+|w|)^q}\int_{D(w,1)} |g'(z)|^q |k_w(z)|^q e^{-\frac{q}{2}|z|^2} dA(z)\\
 \gtrsim \frac{1}{(1+|w|)^q} |g'(w)|^q |k_w(w)|^q e^{-\frac{q}{2}|w|^2}
 = \frac{|g'(w)|^q}{(1+|w|)^q}  = \frac{|2aw+b|^q}{(1+|w|)^q}.
 \end{align*}
 It follows from this and \eqref{ess0} that
 \begin{align}
  \| V_g\|_e^q\gtrsim \limsup_{|w|\to \infty} \frac{|2aw+b|^q}{(1+|w|)^q} \simeq |a|^q
 \end{align} and one side of the inequality in \eqref{ess} follows. To prove the other side, using the inequality $|a+b|^q \leq 2^q(|a|^q+ |b|^q)$ and \eqref{kernelnorm},  we estimate the integral  in \eqref{ess0} also  have
 \begin{align*}
 \int_{\CC} \frac{|g'(z)|^q |k_w(z)|^q}{(1+|z|)^q} e^{-\frac{q}{2}|z|^2} dA(z) \lesssim  |a|^q\int_{\CC} |k_w(z)|^q e^{-\frac{q}{2}|z|^2} dA(z)\nonumber\\
+ |b|^q\int_{\CC}\frac{|k_w(z)|^q}{(1+|z|)^q}  e^{-\frac{q}{2}|z|^2} dA(z) \simeq |a|^q+ \frac{|b|^q}{(1+|w|)^q}
   \end{align*} from which  we deduce the other side inequality
  \begin{align}
  \| V_g\|_e^q\lesssim \limsup_{|w|\to \infty} |a|^q+ \limsup_{|w|\to \infty}\frac{|b|^q}{(1+|w|)^q}=  |a|^q.
  \end{align}
  On the other hand, if $q= \infty$, using the fact that  $k_w$ is a weakly null sequence and eventually  \eqref{kernelnorm}, we estimate  the essential norm from below  by
  \begin{align*}
    \| V_g\|_e \geq  \limsup_{|w|\to \infty} \|V_gk_w\|_\infty \simeq \limsup_{|w|\to \infty} \  \sup_{z\in \CC} \frac{|g'(z)|k_w(z)|e^{-\frac{1}{2}|z|^2}}{1+|z|}\quad \quad \nonumber\\
  \geq \limsup_{|w|\to \infty}  \frac{|g'(z)|k_w(z)|e^{-\frac{1}{2}|z|^2}}{1+|z|}
   \geq \limsup_{|w|\to \infty}  \frac{|g'(w)|k_w(w)|e^{-\frac{1}{2}|w|^2}}{1+|w|}\\
   = \limsup_{|w|\to \infty}  \frac{|aw+b| }{1+|w|}\simeq |a|.
     \end{align*}
     To prove the upper estimates in \eqref{ess}, we consider  maps $\psi_k:\CC \to \CC,\ \ \psi_k(z)=\frac{k}{k+1} z$ for each $k\in \NN$.  Then  $C_{\psi_k}: \mathcal{F}_p \to \mathcal{F}_\infty$ constitutes  a sequence of compact composition operators  for all $p\geq 1$. On the other hand, if  $V_g$ is bounded, then $V_g( C_{\psi_k}):\mathcal{F}_p \to \mathcal{F}_\infty$ also  constitutes a sequence of compact operators.  Using this,
\begin{align*}
\|V_g\|_e \leq \|V_g-V_g( C_{\psi_k})\|= \sup_{\|f\|_p\leq 1} \|(V_g-V_g(C_{\psi_k}))f\|_\infty\quad \quad \quad\nonumber\\
\simeq \sup_{\|f\|_p\leq 1}\  \sup_{z\in \CC}\frac{|g'(z)|\Big|f(z)-f(\psi_k(z))\Big|}{1+|z|}e^{-\frac{1}{2}|z|^2}.
\end{align*}
The supremum above is comparable to the quantity
\begin{align}
\label{sum0}
\sup_{\|f\|_p\leq 1}\   \sup_{|z|>r}\frac{|g'(z)|}{1+|z|}\Big|f(z)-f(\psi_k(z))\Big|e^{-\frac{1}{2}|z|^2}\quad \quad \quad \quad \nonumber\\
+\sup_{\|f\|_p\leq 1} \ \sup_{|z|\leq r}\frac{|g'(z)|}{1+|z|}\Big|f(z)-f(\psi_k(z))\Big|e^{-\frac{1}{2}|z|^2}
\end{align} for a certain fixed positive number $r$. Then,  the first  summand  above is bounded  by
\begin{align}
\label{partly0}
 \sup_{|z|>r} \bigg(\frac{|g'(z)|}{1+\psi'(z)}\bigg) \sup_{\|f\|_p\leq 1}  \sup_{|z|>r}\bigg( \big|f(z)-f(\psi_k(z))\big|e^{-\frac{1}{2}|z|^2}\bigg)\nonumber\\
\leq  \sup_{|z|>r} \bigg(\frac{|g'(z)|}{1+|z|}\bigg) \sup_{\|f\|_p\leq 1} \|f\|_{\infty}\leq  \sup_{|z|>r} \bigg(\frac{|g'(z)|}{1+|z|}\bigg)\sup_{\|f\|_p\leq 1} \|f\|_{p}\nonumber\\
\leq \sup_{|z|>r} \frac{|az+b|}{1+|z|}.
\end{align}
As for the second summand in \eqref{sum0}, we observe  that by integrating the function $f'$ along
 the radial segment $ [\frac{kz}{k+1}z, z]$   we find
\begin{align*}
\bigg|f(z)-f\Big(\frac{k}{k+1}z\Big)\bigg| \leq \frac{|z||f'(z^*)|}{k+1}
\end{align*}  for some $z^*$ in  the radial segment $ [\frac{kz}{k+1}z, z]$. By Cauchy estimate's for $f'$,
\begin{align*}
|f'(z^*)| \leq \frac{1}{r} \max_{|z|=2r} |f(z)|,
\end{align*}
and hence
\begin{align*}
\bigg|f(z)-f\Big(\frac{k}{k+1}z\Big)\bigg| \leq \frac{|z|}{r(k+1)} \max_{|z|=2r} |f(z)|.
\end{align*}
The above estimates  ensure that
\begin{align*}
\frac{|g'(z)|}{1+|z|} \Big|f(z)-f\bigg(\frac{k}{k+1}z\bigg)\Big| e^{-\frac{1}{2}|z|^2} \leq \frac{|z|}{r(k+1)} \sup_{z\in \CC}\bigg(\frac{|az+b|}{1+|z|}e^{-\frac{1}{2}|z|^2}\bigg) \max_{|z|=2r} |f(z)|\nonumber\\
\lesssim \frac{|z|}{r(k+1)} \max_{|z|=2r} |f(z)|.
\end{align*}
Using the fact that $|f(z)|e^{-|z|^2/2} \leq \|f\|_\infty$   we further estimate
\begin{align*}
\max_{|z|=2r} |f(z)|\leq  \max_{|z|=2r} e^{\frac{1}{2}|z|^2} \|f\|_\infty \leq e^{2r^2} \|f\|_p.
\end{align*}
Now combining all the above estimates, we find that the second piece of the sum in \eqref{sum0} is bounded by
\begin{align*}
\sup_{\|f\|_p\leq 1} \ \sup_{|z|\leq r}\frac{|g'(z)|}{1+|z|}\big|f(z)-f(\psi_k(z))\big|e^{-\frac{1}{2}|z|^2}
\lesssim \frac{ e^{2r^2}}{k+1}   \rightarrow 0 \ \text{as} \ k \rightarrow \infty,
\end{align*}
 from which, \eqref{partly0},  and since $r$ is arbitrary, we  deduce
\begin{align*}
\|V_g\|_{e}\lesssim \sup_{|z|>r} \frac{|g'(z)|}{1+|z|}= \limsup_{|z| \to \infty} \frac{|az+b|}{1+|z|}\simeq |a|.
\end{align*}
   \end{proof}
   We may now state the main result of this section on essentially isolated points.
\begin{theorem} \label{thm4}
 Let $1\leq p\leq q\leq \infty$. Then there exists no essentially isolated Volterra-type integral operator in the space  $\textbf{V}(\mathcal{F}_p, \mathcal{F}_q)$.
   \end{theorem}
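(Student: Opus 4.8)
The plan is to read the theorem off from the essential-norm estimate of Proposition~\ref{prop1}. Recall that $V_g$ is \emph{essentially isolated} in $\textbf{V}(\mathcal{F}_p,\mathcal{F}_q)$ exactly when there is an $\varepsilon>0$ such that every bounded $V_{g_1}$ with $\|V_g-V_{g_1}\|_e<\varepsilon$ in fact satisfies $\|V_g-V_{g_1}\|_e=0$; equivalently, $V_g$ fails to be essentially isolated if for every $\varepsilon>0$ one can exhibit a bounded $V_{g_1}$ with $0<\|V_g-V_{g_1}\|_e<\varepsilon$. So I would fix an arbitrary bounded operator $V_g\in\textbf{V}(\mathcal{F}_p,\mathcal{F}_q)$; by the boundedness classification (Theorem~2 of \cite{TM} together with the statement of Proposition~\ref{prop1}) its symbol has the form $g(z)=az^2+bz+c$.

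Next I would fix $\varepsilon>0$ and, for a parameter $\delta\in\CC\setminus\{0\}$ to be chosen, set $g_1(z)=g(z)-\delta z^2=(a-\delta)z^2+bz+c$. Since $g_1$ is again a polynomial of degree at most two, $V_{g_1}:\mathcal{F}_p\to\mathcal{F}_q$ is bounded and hence lies in $\textbf{V}(\mathcal{F}_p,\mathcal{F}_q)$. Moreover $V_g-V_{g_1}=V_{\delta z^2}$, and the symbol $\delta z^2$ has nonzero leading coefficient, so $V_g-V_{g_1}$ is not compact; in particular $\|V_g-V_{g_1}\|_e>0$. Applying Proposition~\ref{prop1} to the symbol $\delta z^2$ produces a constant $C=C(p,q)>0$, independent of $\delta$, with
\[
0<\|V_g-V_{g_1}\|_e=\|V_{\delta z^2}\|_e\leq C|\delta|.
\]
Choosing $\delta$ so that $0<|\delta|<\varepsilon/C$ yields $0<\|V_g-V_{g_1}\|_e<\varepsilon$. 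Since $\varepsilon>0$ and $V_g$ were arbitrary, no bounded Volterra-type operator is essentially isolated, which is the assertion; in view of Theorem~\ref{thm3}, this also shows that every operator-norm isolated point ceases to be isolated once the coarser essential-norm topology is used.

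I do not anticipate a genuine obstacle beyond Proposition~\ref{prop1}: the whole argument is the observation that perturbing the quadratic coefficient of the symbol by $\delta$ keeps the operator bounded while displacing it an essential-norm distance comparable to $|\delta|$. The only two points that merit an explicit remark are that $g_1$ still induces a bounded operator (immediate from the degree-$\le 2$ criterion for boundedness when $p\le q$) and that $V_{\delta z^2}$ is non-compact (immediate since its leading coefficient is nonzero), so that $V_{g_1}$ represents a genuinely distinct point in the essential-norm pseudometric.
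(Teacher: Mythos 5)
Your proof is correct, and it rests on the same two ingredients as the paper's argument --- the linearity $V_g-V_{g_1}=V_{g-g_1}$ and the essential-norm estimate of Proposition~\ref{prop1} --- but you deploy them with a different witness and a different logical organization. The paper first disposes of compact operators via Corollary~\ref{thm2}, and then, for a non-compact $V_g$ with $g(z)=az^2+bz+c$, it keeps the leading coefficient fixed ($a_1=a$) and changes $b$ or $c$: then $\|V_g-V_{g_1}\|_e\simeq|a-a_1|=0$, so the distinct operator $V_{g_1}$ lies in every essential-norm ball centered at $V_g$, which is non-isolation in the sense the paper uses (no ball is contained in the singleton $\{V_g\}$). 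You instead perturb the leading coefficient by a small $\delta\neq 0$, obtaining $0<\|V_g-V_{g_1}\|_e\leq C|\delta|<\varepsilon$. This buys you two things: the argument treats compact and non-compact symbols uniformly, with no appeal to Theorem~\ref{thm3} or Corollary~\ref{thm2}; and it proves the stronger statement that $V_g$ fails to be isolated even after identifying operators at essential distance zero (i.e., in the quotient metric space induced by $\|\cdot\|_e$), whereas the paper's witness exploits precisely the fact that distinct operators can sit at essential distance zero. Since positive essential distance forces $V_{g_1}\neq V_g$, your version of non-isolation implies the paper's. The points that need to be explicit --- and you make them so --- are that $g_1$ is again a polynomial of degree at most two, hence $V_{g_1}$ is bounded for $p\leq q$, and that the comparability constants in Proposition~\ref{prop1} depend only on $p$ and $q$, not on the symbol, so that $C$ is indeed independent of $\delta$.
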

 \begin{proof}
 By Corollary~\ref{thm2}, all compact Volterra-type integral operators are connected. This together with the fact that
 the essential norm topology is weaker than the operator norm topology, no compact operator is essentially isolated. Thus, we consider
     an operator $V_g\in  \textbf{V}(\mathcal{F}_p, \mathcal{F}_q)$, and assume that it is   isolated in the operator norm topology. Then by Theorem~\ref{thm3}, $V_g$ is not compact and hence $g(z)= az^2+bz+c$  with  $a\neq 0$. We plan to show that $V_g$ is not essentially isolated. That is every open ball with positive radius and center  $V_g$ is not contained in the singleton set $\{ V_g\}$. If  $V_{g_1}$ with $g_1(z)= a_1z^2+b_1z+c_1$  belongs to $  \textbf{V}(\mathcal{F}_p, \mathcal{F}_q)$,  then by  applying linearity of the integral and Proposition~\ref{prop1},
 \begin{align*}
 \| V_{g_1}-V_{g_1}\|_e = \| V_{g-g_1}\|_e \simeq |a-a_1|
 \end{align*}  which gives an estimate for the essential norm of the difference of  two Volterra-type integral operators.
 Now choosing $g_1(z)= a_1z^2+b_1z+c_1$ in such a way that $a=a_1$ and $V_{g_1}\neq V_g$, we  observe that $g_1$ belongs to every ball of center $V_g$ and positive radius. But $V_{g_1}$ does not belong to $\{ V_g\}$ and completes the proof.
   \end{proof}
For each $a\in \CC$, we set
    \begin{align*}
    V_a= \Big\{V_{g_a}\in \textbf{V}(\mathcal{F}_p, \mathcal{F}_q): g_a(z)= az^2+bz+c,  \ \ b, c\in \CC \Big \}.
    \end{align*}
    Then if $ a=0,  V_0$  corresponds to the set of compact operators in $\textbf{V}(\mathcal{F}_p, \mathcal{F}_q)$ which is path connected by Corollary~\ref{thm2}.
    If  $ a\neq 0$,  then $V_{g_a}$ is not essentially isolated by Theorem~\ref{thm4}.
    As a consequence of all these, we get the following.
    \begin{corollary}
    \label{cor1}
    Let $1\leq p\leq q\leq \infty$. Then  $ \textbf{V}(\mathcal{F}_p, \mathcal{F}_q)$ has the following essentially path connected components:
    \begin{align*}
    \textbf{V}(\mathcal{F}_p, \mathcal{F}_q)= \bigcup_{a\in \CC} V_a.
    \end{align*}
    \end{corollary}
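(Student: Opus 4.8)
The plan is to establish Corollary~\ref{cor1} by showing two things: that each $V_a$ is essentially path connected (indeed, that any two of its members are joined by a path that is \emph{continuous in the essential norm}), and that distinct $V_a$ are separated in essential norm, so no strictly larger set can be essentially connected. The decomposition $\textbf{V}(\mathcal{F}_p, \mathcal{F}_q)= \bigcup_{a\in\CC} V_a$ is itself immediate, since by Theorem~3 of \cite{TM} (invoked already in Proposition~\ref{prop1}) every bounded $V_g:\mathcal{F}_p\to\mathcal{F}_q$ has symbol of the form $g(z)=az^2+bz+c$, so each operator lies in exactly one $V_a$.

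First I would handle the case $a=0$: here $V_0$ is precisely the set of compact Volterra operators, which is path connected in the \emph{operator} norm by Corollary~\ref{thm2}, hence a fortiori path connected in the essential norm since the latter topology is coarser. For $a\neq 0$, I would argue that $V_a$ is essentially path connected by producing an explicit path. Given $V_{g_a}$ and $V_{h_a}$ with $g_a(z)=az^2+bz+c$ and $h_a(z)=az^2+b_1z+c_1$, set $\gamma_a^{(t)}(z)=az^2+((1-t)b+tb_1)z+((1-t)c+tc_1)$ for $t\in[0,1]$, and define $\Gamma(t)=V_{\gamma_a^{(t)}}$. Each $\gamma_a^{(t)}$ still has leading coefficient $a$ and lower-order terms that are bounded uniformly in $t$, so $V_{\gamma_a^{(t)}}$ remains bounded for all $t$; and by linearity of the integral together with the essential-norm estimate in Proposition~\ref{prop1},
\begin{align*}
\|\Gamma(t)-\Gamma(s)\|_e = \|V_{\gamma_a^{(t)}-\gamma_a^{(s)}}\|_e \simeq |t-s|\,\big|\text{(leading coeff.\ of }\gamma_a^{(t)}-\gamma_a^{(s)})\big| = 0,
\end{align*}
since the quadratic terms cancel. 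Thus $\Gamma$ is not merely continuous but essentially constant, so trivially a path in the essential-norm topology joining the two operators inside $V_a$. This shows each $V_a$ lies in a single essentially path connected component.

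Conversely I must show that $V_a$ and $V_{a'}$ with $a\neq a'$ are in different essentially connected (hence essentially path connected) components. By Proposition~\ref{prop1}, for any $V_{g_a}\in V_a$ and $V_{h_{a'}}\in V_{a'}$ we have $\|V_{g_a}-V_{h_{a'}}\|_e \simeq |a-a'|\geq \delta>0$ for a fixed constant depending only on $p,q$. Therefore the essential-norm distance between the sets $V_a$ and $V_{a'}$ is bounded below by a positive constant; more to the point, each $V_a$ is both open and closed in $\textbf{V}(\mathcal{F}_p,\mathcal{F}_q)$ with respect to the essential-seminorm-induced pseudometric structure (the essential norm of a difference vanishes exactly when the leading coefficients agree), so no connected subset of $\textbf{V}(\mathcal{F}_p,\mathcal{F}_q)$ can meet two different $V_a$'s. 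Combining this with the previous paragraph identifies the $V_a$ precisely as the essentially path connected components.

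The main subtlety — really the only one — is that the essential norm is a seminorm, not a norm, on $\textbf{V}(\mathcal{F}_p,\mathcal{F}_q)$ (it vanishes on all compact operators, not just on $0$), so one should be careful about what "essentially path connected component" means: it is the path component of the topology (or rather, the not-quite-Hausdorff topology) induced by the essential norm. The equivalence $\|V_g-V_{g_1}\|_e\simeq |a-a_1|$ from Proposition~\ref{prop1} tells us this topology is, up to the standard quotient, just the topology of $\CC$ pulled back via $V_g\mapsto a$; under that identification the components are the fibers $V_a$, which is exactly the assertion. I would state this cleanly and let the two paragraphs above supply the "fiber is connected" and "distinct fibers are separated" halves. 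No genuinely new estimate is needed beyond Proposition~\ref{prop1} and Corollary~\ref{thm2}.
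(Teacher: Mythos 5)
Your first half matches the paper's own (very brief) derivation: the partition of $\textbf{V}(\mathcal{F}_p,\mathcal{F}_q)$ into the fibers $V_a$ via the boundedness characterization $g(z)=az^2+bz+c$, the case $a=0$ settled by Corollary~\ref{thm2}, and, for $a\neq 0$, the observation that any two members of $V_a$ differ by an operator with linear symbol and hence are at essential distance zero by linearity and Proposition~\ref{prop1} (the paper routes this through Theorem~\ref{thm4}, whose proof is the same computation $\|V_{g}-V_{g_1}\|_e\simeq|a-a_1|$). The genuine problem is your converse half. The claim that each $V_a$ is open in the pseudometric induced by $\|\cdot\|_e$ is false: applying Proposition~\ref{prop1} to the symbol $\epsilon z^2$ shows that $V_{g_a+\epsilon z^2}$ lies within essential distance $\lesssim|\epsilon|$ of $V_{g_a}$, so every essential ball centered in $V_a$ meets $V_{a'}$ for all $a'$ sufficiently close to $a$. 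Your own identification of the essential topology with the topology of $\CC$ pulled back through $V_g\mapsto a$ already makes this plain: the fibers correspond to single points of $\CC$, and points are not open there.

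Moreover, the separation you want is not just unproved but unavailable by this route: for any two bounded symbols $g$ and $h$ (with leading coefficients $a_g$, $a_h$), the convex combination $t\mapsto V_{(1-t)g+th}$ stays in $\textbf{V}(\mathcal{F}_p,\mathcal{F}_q)$ and satisfies $\|V_{(1-t)g+th}-V_{(1-s)g+sh}\|_e\simeq|t-s|\,|a_g-a_h|$ by Proposition~\ref{prop1}, hence is continuous for the essential seminorm; so any two fibers are joined by an essentially continuous path, exactly as the path components of $\CC$ are not its singletons. The fixed lower bound $\|V_{g_a}-V_{h_{a'}}\|_e\gtrsim|a-a'|$ does not separate connected components, since it degenerates as $a'\to a$ and a path may pass through intermediate fibers. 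What can legitimately be recorded — and all that the paper's two-line argument from Corollary~\ref{thm2} and Theorem~\ref{thm4} actually establishes — is the content of your first half: the $V_a$ partition the space, each $V_a$ is essentially path connected (indeed each is a coset of the kernel of the essential seminorm, so of essential diameter zero for $a\neq 0$, while $V_0$ is the path connected set of compact operators), and distinct fibers are at positive essential distance. A genuine "components" statement would require passing to the quotient by the relation $\|T-S\|_e=0$, under which the space becomes bi-Lipschitz equivalent to $\CC$ and the fibers collapse to points; your maximality/openness step as written does not hold.
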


\end{document}